\begin{document}

\newcommand{\om}{\omega}

\newcommand{\beqn}{\begin{eqnarray}}
\newcommand{\eeqn}{\end{eqnarray}}
\newcommand{\fr}{\frac}
\newcommand{\nin}{\noindent}
\newcommand{\beq}{\begin{equation}}
\newcommand{\eeq}{\end{equation}}
\newtheorem{defn}[subsection]{Definition}
\newtheorem{thm}[subsection]{Theorem}
\newtheorem{prop}[subsection]{Proposition}
\newtheorem{cor}[subsection]{Corollary}
\newtheorem{remark}[subsection]{Remark}
\newtheorem{conjecture}[subsection]{Conjecture}

\newenvironment{rem}{\smallskip\noindent%
\refstepcounter{subsection}%
{\bf \thesubsection}~~{\sc Remark.}\hspace{-1mm}}
{\smallskip}

\newenvironment{ex}{\smallskip\noindent%
\refstepcounter{subsection}%
{\bf \thesubsection}~~{\sc Example.}}
{\smallskip}

\newenvironment{que}{\smallskip\noindent%
\refstepcounter{subsection}%
{\bf \thesubsection}~~{\sc Question.}\hspace{-1mm}}
{\smallskip}

\renewcommand{\theequation}{\arabic{section}.\arabic{equation}}

\newcommand{\Vir}{\text{\upshape Vir}}
\newcommand{\Rot}{\text{\upshape Rot}}
\newcommand{\Diff}{\text{\upshape Diff}}
\newcommand{\Vect}{\text{\upshape Vect}}
\newcommand{\Met}{\text{\upshape Met}}
\newcommand{\Metmu}{\Met_{\mu}}
\newcommand{\Vol}{\text{\upshape Dens}}
\newcommand{\SDiff}{Diff_\mu}
\newcommand{\SVect}{\Vect_\mu}
\newcommand{\Ham}{\text{\upshape Ham}}
\newcommand{\HamM}{\Ham_\omega(M)}
\newcommand{\ham}{\text{\upshape ham}}
\newcommand{\hamM}{\ham_\omega(M)}
\newcommand{\Symp}{\text{\upshape Symp}}
\newcommand{\symp}{\text{\upshape symp}}
\newcommand{\SDiffM}{\Diff_\mu(M)}
\newcommand{\SympM}{\Symp_\omega(M)}
\newcommand{\sympM}{\symp_\omega(M)}
\newcommand{\Diffmu}{\Diff_{\mu}}
\newcommand{\Isog}{\text{Iso}_g}
\newcommand{\DiffM}{\Diff(M)}
\newcommand{\MetM}{\Met(M)}
\newcommand{\MetmuM}{\Metmu(M)}
\newcommand{\VolM}{\Vol(M)}
\newcommand{\DiffmuM}{\Diffmu(M)}
\newcommand{\R}{\mathbb R}
\newcommand{\C}{\mathbb C}



\title{\Large\bf  Dynamics of symplectic fluids and point vortices}

\date{June 2011}

\author{\large\bf Boris Khesin\thanks{Department of Mathematics,
University of Toronto, Toronto, ON M5S 2E4, Canada;
e-mail: {\tt khesin@math.toronto.edu}}~}

\maketitle
\smallskip

{\hskip 1.5in{\it In memory of Vladimir Igorevich Arnold} }

\bigskip

\begin{abstract}
We present the Hamiltonian formalism for the Euler equation of 
symplectic fluids, introduce symplectic vorticity, 
and study related invariants. 
In particular, this allows one to extend D.~Ebin's long-time 
existence result for  geodesics on the symplectomorphism group
to metrics not necessarily compatible
with the symplectic structure. We also study the dynamics 
of symplectic point vortices, describe their symmetry groups and integrability.
\end{abstract} 



\bigskip
\bigskip



In 1966 V.~Arnold showed how the Euler equation describing dynamics of 
an ideal incompressible fluid  
on a Riemannian manifold can be viewed as a geodesic equation 
on the group of volume-preserving diffeomorphisms of this manifold \cite{arn}. Consider a similar problem for a symplectic fluid.

Let $(M^{2m},\om)$ be a closed symplectic manifold equipped with 
a Riemannian metric. A symplectic fluid filling $M$ is an ideal fluid 
whose motions 
preserve not only the volume element, but also the symplectic structure $\om$. 
(In 2D symplectic and ideal inviscid incompressible fluids coincide.) 
Such motions are
governed by the corresponding Euler-Arnold equation, i.e. the equation
describing geodesics on the infinite-dimensional group $\SympM$ 
of symplectomorphisms of $M$ with respect to the right-invariant $L^2$-metric. 
The corresponding problem of studying this dynamics
was posed in \cite{AK} (see Section IV.8).

Recently D.~Ebin \cite{Ebin} considered the corresponding  
Euler equation of the symplectic fluid 
and proved the existence of solutions for all times for compatible metrics 
and symplectic structures.
His proof uses the existence of a pointwise invariant transported by the flow, 
similar to the vorticity 
function in 2D. This symplectic vorticity allows one to proceed 
with the  existence proof in the symplectic case similarly to the 2D setting.

The purpose of this note is three-fold. First, 
we describe the Hamiltonian formalism of the  Euler-Arnold equation 
for symplectic fluids, the corresponding dual spaces, inertia operators, 
and Casimir invariants. This formalism manifests a curious duality to the incompressible case: its natural setting is a quotient space of 
$(n-1)$-forms for symplectic fluids vs. that of 
1-forms for incompressible ones. Second, we show the geometric 
origin of the symplectic vorticity arising
from the general approach to ideal fluids. 
In particular we prove that this quantity is a pointwise 
invariant for any metric, not only for a metric compatible 
with the symplectic structure, which allows one to extend the corresponding
long-time existence theorem for solutions of the symplectic Euler equation, see \cite{Ebin}.
We also present a variational description of the Hamiltonian stationary flows.
Finally, we present and study the problem of symplectic point vortices.
The Hamiltonian of the latter problem has more similarities
with the $N$-body problem in celestial mechanics than that 
of the classical problem of $N$ point vortices on the plane. We prove that 
this problem of symplectic vortices is completely integrable 
for $N=2$ point vortices in any dimension. 
We conjecture that it is not completely integrable for $N\ge 3$ in the 
Arnold-Liouville sense, similarly to the $N$-body problem in celestial mechanics.

To make the paper relatively self-contained we recall the setting
of an ideal fluid and classical point vortices and compare them to 
the new symplectic framework. While the analytical side of the problem
was explored in \cite{Ebin}, in this paper we are concerned with the geometric
and Hamiltonian aspects of the problem.

\bigskip

 
\section{Arnold's framework for the Euler  equation of an ideal incompressible fluid} 
 
We start with a brief reminder of the main setting of an ideal hydrodynamics.
Consider the Euler equation for an inviscid incompressible fluid 
filling some domain M in $\mathbb R^n$. The fluid motion is described 
as an evolution of the fluid velocity field $v(t,x)$  in $M$ which is governed by  the 
{\it classical Euler equation}:
\begin{equation}\label{ideal}
\partial_t v+v\cdot \nabla v=-\nabla p\,.
\end{equation}
Here the field $v$ is assumed to be divergence-free (${\rm div} v=0$)  
and tangent to the boundary of $M$.
The pressure function $p$ is defined uniquely modulo an additive constant 
by these restrictions on the velocity $v$.

The same equation describes the motion of an ideal incompressible fluid 
filling an arbitrary Riemannian manifold $M$ equipped with 
a volume form $\mu$ \cite{arn, EM}. 
In the latter case $v$ is a divergence-free vector field on $M$, while 
$v\cdot \nabla v$ stands for the Riemannian covariant derivative 
$\nabla_v v$ of the field $v$ in the direction of itself, while the divergence
$\mathrm{ div }~ v$ is taken with respect to the volume form $\mu$. 
 
\subsection*{The Euler equation as a geodesic flow}
Equation (\ref{ideal}) has a natural interpretation as a geodesic equation.
Indeed, the flow $(t,x)\to g(t,x)$ describing the motion of 
fluid particles is defined by its velocity field $v(t,x)$:
$$
\partial_t g(t,x)=v(t,g(t,x)), \,\, g(0,x)=x.
$$
The chain rule immediately gives
$\partial^2_{tt} g(t,x)=(\partial_t v+v\cdot \nabla v)(t,g(t,x))$, and hence the 
Euler equation is equivalent to 
$$
\partial^2_{tt} g(t,x)=-(\nabla p)(t,g(t,x)),
$$
while the incompressibility condition is $\det(\partial_x g(t,x))=1$.
The latter form of the Euler equation (for a smooth flow $g(t,x)$)
means that it describes a geodesic on the set $\SDiffM$ of 
volume-preserving diffeomorphisms of the manifold $M$.
Indeed, the acceleration of the flow, $\partial^2_{tt} g$, 
being given by a gradient, $-\nabla p$,
is $L^2$-orthogonal to all divergence-free fields, which constitute 
the tangent space to this set $\SDiffM$. 

In the case of any Riemannian  manifold $M$ the Euler equation defines 
the geodesic flow on the group of volume-preserving diffeomorphisms of $M$ 
with respect to the right-invariant $L^2$-metric on $\SDiffM$. 
(A proper analysis framework deals with the Sobolev $H^s$ spaces of
vector fields with $s>\frac n2+1$ and it is described in \cite{EM}.)

In the Euler equation for a symplectic fluid the same derivation replaces 
the gradient term $\nabla p$ in \eqref{ideal}
by a vector field $q$ which is $L^2$-orthogonal to the space 
of all symplectic vector fields and can be found in terms of 
the Hodge decomposition.
\bigskip



\subsection*{Arnold's framework for the Euler-type
equations} 
In \cite{arn} V. Arnold suggested the following general framework for the Euler
equation on an arbitrary group describing a geodesic flow with 
respect to a suitable one-sided invariant Riemannian metric on this group.

Consider a (possibly infinite-dimensional) Lie group $G$,
which can be thought of as the configuration
space of some physical system. The tangent space at the identity of the
Lie group $G$ is the corresponding Lie algebra
$\mathfrak g$. Fix some (positive definite) quadratic form, the ``energy,''  
$E(v)=\frac 12\langle v,Av\rangle$
on  $\mathfrak g$ and  consider right translations
of this quadratic form to the tangent space at any point of the group
(the ``translational symmetry'' of the energy).
This way the energy defines a  right-invariant
Riemannian metric on the group $G$.
The geodesic flow on $G$ with respect to this energy metric
represents the extremals of the least action principle, i.e.,
the actual motions of our physical system.

To describe a geodesic on the  Lie group with an initial velocity 
$v(0)=\xi$, we transport its velocity vector at any moment  $t$
to the identity of the group (by using the right translation).
This way we obtain the evolution law for $v(t)$, given by
a (non-linear) dynamical system $dv/dt=F(v)$
on the Lie algebra $\mathfrak g$. The system on the Lie algebra $\mathfrak g$,
describing the evolution
of the velocity vector  along a geodesic in a right-invariant metric
on the Lie group $G$, is called the {\it Euler} (or {\it Euler-Arnold}) 
{\it equation} corresponding to
this metric on $G$.

%
%

The Euler equation on the Lie algebra $\mathfrak g$ has 
a more explicit Hamiltonian reformulation
on the dual space $\mathfrak g^*$ of the Lie algebra $\mathfrak g$.
Identify the Lie algebra and its dual with the help of the above quadratic form
$E(v)=\frac 12\langle v,Av\rangle$.
This identification   $A:\mathfrak g\to \mathfrak g^*$ (called the {\it inertia
operator}) allows one to rewrite the Euler equation on the dual space
 $\mathfrak g^*$.

It turns out that the Euler equation on $\mathfrak g^*$
is Hamiltonian with respect to the natural Lie--Poisson structure on
the dual space \cite{arn}.
Moreover, the corresponding Hamiltonian function is  the energy
quadratic form lifted from the Lie algebra to its
dual space by the same identification:
$H(m)=\frac 12\langle A^{-1}m,m\rangle$, where $m=Av$.
Here we are going to take it as the {\it definition}
of the Euler equation on the dual space $\mathfrak g^*$.

\begin{defn} {\rm (see, e.g., \cite{AK})}
The  {\rm Euler equation on} $\mathfrak g^*$, corresponding to the 
right-invariant metric $E(m)=\frac 12\langle Av,v\rangle$ on the group,
is given by the following explicit formula:
\begin{equation}\label{linEuler}
\frac{dm}{dt}=-{\rm ad}^*_{A^{-1}m}m,
\end{equation}
as an evolution of a point $m\in \mathfrak g^*$.
\end{defn}

\bigskip


\section{Hamiltonian approach to incompressible fluids}
In this section we recall the Hamiltonian framework for the classical 
Euler hydrodynamics of an incompressible fluid, which we are going 
to generalize to symplectic fluids in the next section.

Let $M$ be an $n$-dimensional Riemannian manifold with a volume form $\mu$ 
and filled with an ideal incompressible fluid.
The corresponding Lie group $G=\SDiffM$ is the group of volume-preserving 
diffeomorphisms of $M$. 
The corresponding Lie algebra $\mathfrak g={\SVect}(M)$ 
consists of  divergence-free vector field in $M$:  
${\SVect}(M)=\{v\in \Vect(M)~|~L_v\mu=0\}$. 

Equip the group $G$ with the right-invariant metric by using
the $L^2$-product on divergence-free vector fields on $M$.
The formalism of the hydrodynamical Euler equation can be summarized 
in the following theorem:

\begin{thm} {\rm (see \cite{AK})}
a) The dual space
$\mathfrak g^*=\Omega^1(M)/\Omega^0(M)$ 
is the space of cosets of 1-forms on $M$ 
modulo exact 1-forms. The group coadjoint action is the change of coordinates
in the 1-form, while the corresponding Lie algebra coadjoint action is the 
Lie derivative along a vector field: ${\rm ad}^*_v=L_v$. Its action on cosets
 $[u]\in \Omega^1/d\Omega^0$ is well-defined.

b) The inertia operator is lifting the indices: $A: v\mapsto [v^\flat]$, 
where one considers the coset of the 1-form $v^\flat$ on $M$.
More precisely, for a manifold $M$ equipped with a Riemannian metric
$(.,.)$ one defines the 1-form $v^\flat$ as the pointwise inner product with 
vectors of the velocity field $v$:
$v^\flat(\eta): = (v,\eta)$ for all $\eta\in T_xM$.

c) The Euler equation (\ref{linEuler}) on the dual space has the form
$$
\partial_t[u]=-L_v[u],
$$
where $[u]\in \Omega^1(M)/\Omega^0(M)$ stands for a coset of 1-forms and
the vector field $v$ is related with a 1-form $u$ by means of 
a Riemannian metric on $M$: $u=v^\flat$. 
\end{thm}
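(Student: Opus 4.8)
The plan is to prove this theorem by instantiating Arnold's general framework from Definition 1.1 to the specific group $G=\SDiffM$, so that parts (a), (b), and (c) follow from identifying three ingredients: the dual space $\mathfrak{g}^*$ with its coadjoint action, the inertia operator $A$, and then the resulting explicit form of equation (\ref{linEuler}).

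\medskip

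First I would establish part (a). The Lie algebra is $\mathfrak g=\SVect(M)$, divergence-free vector fields. To identify its dual, I would pair a divergence-free field $v$ against a 1-form $u$ via $\langle u, v\rangle = \int_M u(v)\,\mu$. Two 1-forms $u$ and $u+df$ pair identically against every divergence-free $v$, since $\int_M df(v)\,\mu = \int_M (L_v f)\,\mu = -\int_M f\,(L_v\mu)=0$ by divergence-freeness and Stokes. Hence the natural dual is the quotient $\Omega^1(M)/d\Omega^0(M)$, and I would argue this pairing is nondegenerate on the quotient (using Hodge theory to match dimensions of the relevant spaces). The coadjoint action then comes from dualizing the adjoint action $\mathrm{ad}_v w = -[v,w]=-L_v w$; a computation using the invariance of $\mu$ under $v$ shows $\langle \mathrm{ad}^*_v[u], w\rangle = \langle [u], \mathrm{ad}_v w\rangle$ reduces to $\mathrm{ad}^*_v[u] = [L_v u]$, and I would check this descends to cosets because $L_v$ commutes with $d$, so $L_v(d\Omega^0)\subset d\Omega^0$.

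\medskip

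Part (b) is essentially the definition of the metric: the $L^2$-inner product $\langle v,w\rangle=\int_M (v,w)\,\mu$ on $\SVect(M)$, when rewritten as a pairing between $\mathfrak g$ and $\mathfrak g^*$, must send $v$ to the functional $w\mapsto \int_M(v,w)\,\mu=\int_M v^\flat(w)\,\mu$. So the inertia operator is $A\colon v\mapsto[v^\flat]$, and I would verify it lands in the correct quotient and is an isomorphism onto $\mathfrak g^*$ (this is again a Hodge-theoretic point: $v^\flat$ for divergence-free $v$ represents a coclosed 1-form up to the exact ambiguity). For part (c), I would substitute $A^{-1}m = v$ with $m=[u]=[v^\flat]$ into the abstract Euler equation (\ref{linEuler}), namely $dm/dt = -\mathrm{ad}^*_{A^{-1}m}m$, and use the formula $\mathrm{ad}^*_v=L_v$ from part (a) to obtain directly $\partial_t[u]=-L_v[u]$.

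\medskip

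The main obstacle I expect is the careful justification in part (a) that the pairing is nondegenerate precisely on the quotient $\Omega^1/d\Omega^0$ and that $\Omega^1(M)/d\Omega^0(M)$ is the \emph{full} smooth dual (or the relevant regular part of it) rather than something larger or smaller; in infinite dimensions one must identify the appropriate ``regular dual,'' and this relies on the Hodge decomposition to split off the exact part cleanly and to see that no further 1-forms act trivially on all divergence-free fields. The remaining verifications—that $L_v$ descends to cosets, and the bookkeeping in substituting into (\ref{linEuler})—are routine once the dual pairing is pinned down. Since the statement cites \cite{AK}, I would lean on that reference for the functional-analytic subtleties while presenting the algebraic identification explicitly.
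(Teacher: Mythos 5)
Your proposal is correct and follows essentially the same route as the paper: the same pairing $\langle v,[u]\rangle=\int_M (i_vu)\,\mu$, the same reading of the inertia operator off the $L^2$-metric, and the same substitution of $\mathrm{ad}^*_v=L_v$ and $A^{-1}m=v$ into (\ref{linEuler}). The only cosmetic difference is that the paper packages the identification of the dual space through the isomorphism $v\mapsto i_v\mu$ between $\SVect(M)$ and closed $(n-1)$-forms, whereas you verify directly that exact 1-forms annihilate divergence-free fields and appeal to Hodge theory for nondegeneracy on the quotient; these amount to the same argument.
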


The idea of the proof is that the 
map $v\mapsto i_v\mu$ provides an isomorphism of
the space of divergence-free vector fields and 
the space of closed $(n-1)$-forms on $M$: 
${\SVect}(M)\cong Z^{n-1}(M)$, since $d(i_v\mu)=L_v\mu=0$.
Then the dual space to the Lie algebra $\mathfrak g=Z^{n-1}$ 
is $\mathfrak g^*= \Omega^1/d\Omega^0$, and the pairing is
$$
\langle v,[u]\rangle:=\int_M (i_vu)\,\mu\,.
$$
The coadjoint action is the change of coordinates in differential forms.
The substitution of the inertia and coadjoint operators to the formula (\ref{linEuler})
yields the Euler equation.

\begin{remark} \upshape 
The Euler equation for a coset $[u]$
can be rewritten as an equation for a representative 1-form modulo
a function differential $dp$:
$$
\partial_t u +L_v u=-dp,
$$
where one can recognize the elements of the Euler equation (\ref{ideal})
for an ideal fluid.
\end{remark}

Note that each coset $[u]$ has a unique 1-form $\bar u\in [u]$
related to a {\it divergence-free} vector field by means of the metric. 
This is a coclosed 1-form:  $\delta \bar u=0$ on $M$.
Such a choice of a representative $\bar u\in [u]$
defines the pressure $p$ uniquely (modulo a constant), since
$\Delta p:=\delta d \,p$ gets prescribed for each time $t$.

\begin{remark} \upshape 
Define the vorticity 2-form $\xi=du$. It is well defined for a coset $[u]$.
The vorticity form of the Euler equation is
$$
\partial_t \xi=-L_v \xi,
$$
which means that the vorticity 2-form $\xi$ is transported by the flow.
The frozenness of the vorticity form allows one to define 
various invariants of the  hydrodynamical Euler equation.
\end{remark}

\begin{remark} \upshape 
Recall, that the Euler equation of an ideal fluid (\ref{ideal}) filling a  
three-dimensional simply connected manifold $M$
has the helicity (or Hopf) invariant. 
Topologically helicity in 3D describes the mutual
linking of the trajectories of the vorticity field ${\rm curl }\,v$, and
has the form $J(v) =\int_{M^3} ({\rm curl}\,v,~v)~\mu$.

For an ideal 2D fluid one
has an infinite number of conserved quantities, so called {\it enstrophies}:
$$
J_k(v) = \int_{M^2} ({\rm curl}~ v)^k~\mu
\qquad{\rm for } ~k=1,2,\dots,
$$
where curl~$v:=du/\mu={\partial v_1}/{\partial x_2}
-{\partial v_2}/{\partial x_1}$ is a {\it vorticity function}
of a 2D flow.

It turns out that enstrophy-type integrals   exist 
for all even-dimensional flows,  and so do
 helicity-type integrals for all
odd-dimensional ideal fluid flows, see e.g. \cite{Ser, AK}.
The invariance of the helicity and enstrophies follows, in fact, from their 
coordinate-free definition: they are 
invariant with respect to volume-preserving coordinate changes, and hence, are first integrals
 of the corresponding Euler equations.
\end{remark}

\bigskip


\section{Hamiltonian approach to symplectic fluids}

Let $(M^{2m},\omega)$ be a closed symplectic manifold of dimension $n=2m$ which is also
equipped with a Riemannian metric. Consider the dynamics of 
a fluid in $M$ preserving the symplectic 2-form $\omega$.

The configuration space of a symplectic fluid on $M$ is the 
Lie group  $G=\SympM$ and it is equipped with the right-invariant 
$L^2$-metric. It is a subgroup of the group of volume-preserving 
diffeomorphisms: 
$G=\SympM\subset \SDiffM$, where $\mu=\omega^m/m$ is the symplectic volume. 
We will also assume that the metric volume coincides with the symplectic one, 
but do not assume that 
the symplectic structure and metric are compatible.

The corresponding Lie algebra is 
$\mathfrak g={\rm symp}_\omega(M)=\{v\in\Vect(M)~|~L_v\omega=0\}$.

\begin{thm} 
a) The dual space to $\mathfrak g={\sympM}$
is $\mathfrak g^*={\rm symp}^*_\omega(M)\cong\Omega^{n-1}(M)/d\Omega^{n-2}(M)$.
The pairing between  $v\in \mathfrak g=\sympM$ and $\alpha\in \Omega^{n-1}(M)$ 
is given by the formula
\begin{equation}\label{sympair}
\langle  v, \alpha\rangle:=\int_M \alpha\wedge\,i_v\omega \,,
\end{equation}
and it is well-defined on cosets $[\alpha]\in \Omega^{n-1}(M)/d\Omega^{n-2}(M)$.

The algebra coadjoint action is the Lie derivative: ${\rm ad}^*_v=L_v$ and 
it is well-defined on cosets.

b) The inertia operator $A$ 
is lifting the indices and wedging with $\omega^{m-1}$, i.e. 
$A: v\mapsto [v^\flat\wedge \,\omega^{m-1}]$, where $v^\flat$ is the 1-form
on $M$ and one takes the corresponding coset of the form 
$\alpha= v^\flat\wedge\, \omega^{m-1}\in \Omega^{2m-1}(M)$.

c) The Euler equation (\ref{linEuler}) on the dual space has the form
$$
\partial_t[\alpha]=-L_v[\alpha],
$$
where the vector field $v$ is related to the $(n-1)$-form $\alpha$ 
by means of the inertia operator:
$\alpha=v^\flat\wedge \,\omega^{m-1}$ 
and $ [\alpha]\in \Omega^{n-1}/d\Omega^{n-2}$ 
stands for its coset. 
\end{thm}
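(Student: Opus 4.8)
The plan is to follow the same strategy used for the incompressible case in the previous theorem, namely to identify the Lie algebra $\mathfrak g = \sympM$ with a space of differential forms, compute its dual, and then substitute the coadjoint and inertia operators into the general formula \eqref{linEuler}. The key observation that starts everything is the symplectic analogue of the isomorphism $v\mapsto i_v\mu$. For a symplectic vector field $v$, the condition $L_v\omega=0$ together with Cartan's formula $L_v\omega = d\,i_v\omega + i_v d\omega = d\,i_v\omega$ (using $d\omega=0$) shows that $i_v\omega$ is a \emph{closed} 1-form. Thus the map $v\mapsto i_v\omega$ identifies $\sympM$ with the closed 1-forms $Z^1(M)$. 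This is the mirror image of the incompressible situation, where divergence-free fields were identified with closed $(n-1)$-forms; here we get closed $1$-forms instead, which is exactly the duality the introduction advertises.

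For part (a), once $\mathfrak g\cong Z^1(M)$, I would compute the dual space by the standard pairing-and-annihilator argument. The natural pairing of a closed 1-form $i_v\omega$ against an $(n-1)$-form $\alpha$ is $\int_M \alpha\wedge i_v\omega$, which is formula \eqref{sympair}. To see that this descends to cosets in $\Omega^{n-1}/d\Omega^{n-2}$, I would check that adding an exact form $d\beta$ with $\beta\in\Omega^{n-2}$ changes the pairing by $\int_M d\beta\wedge i_v\omega = \pm\int_M \beta\wedge d(i_v\omega)$ after integration by parts (boundaryless $M$), and this vanishes because $i_v\omega$ is closed. Hence $\mathfrak g^*\cong \Omega^{n-1}/d\Omega^{n-2}$ as claimed. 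For the coadjoint action, the group acts by pullback (change of coordinates) on forms, and differentiating gives the Lie derivative $\mathrm{ad}^*_v=L_v$; its well-definedness on cosets follows because $L_v$ commutes with $d$, so it preserves the subspace $d\Omega^{n-2}$.

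For part (b), the inertia operator must send $v$ to the element of $\mathfrak g^*$ that represents the $L^2$-energy pairing. The $L^2$ metric pairs $v$ with itself via $\int_M (v,v)\,\mu$ where $\mu=\omega^m/m!$, so I want to produce an $(n-1)$-form $\alpha$ with $\langle w,\alpha\rangle = \int_M (v,w)\,\mu$ for all symplectic $w$. The natural candidate is $\alpha=v^\flat\wedge\omega^{m-1}$: substituting into \eqref{sympair} gives $\int_M v^\flat\wedge\omega^{m-1}\wedge i_w\omega$, and I would verify by a pointwise linear-algebra computation that $v^\flat\wedge\omega^{m-1}\wedge i_w\omega$ equals (up to the correct constant) the volume form $(v,w)\,\mu$. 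This pointwise identity is the one genuinely computational step, and it is where I expect the main obstacle to lie: one must carefully track the combinatorial constants relating $\omega^{m-1}\wedge i_w\omega$ and $w^\flat\wedge\omega^{m-1}$ and confirm the symmetry $\langle w, v^\flat\wedge\omega^{m-1}\rangle = \langle v, w^\flat\wedge\omega^{m-1}\rangle$, which is what makes $A$ a well-defined symmetric inertia operator. Finally, part (c) is immediate: plugging $\mathrm{ad}^*_v=L_v$ and $m=[\alpha]=A(v)=[v^\flat\wedge\omega^{m-1}]$ into \eqref{linEuler} gives $\partial_t[\alpha]=-L_v[\alpha]$, exactly as stated.
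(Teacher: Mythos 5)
Your proposal is correct and follows essentially the same route as the paper: identify $\sympM$ with $Z^1(M)$ via $v\mapsto i_v\omega$, dualize by the wedge pairing, and read off the inertia operator from the energy. The one computation you defer, the pointwise identity behind part (b), is done in the paper in a single line via $(i_v v^\flat)\,\omega^m = v^\flat\wedge i_v\omega^m$ (since $v^\flat\wedge\omega^m$ is an $(n+1)$-form, hence zero) together with $i_v\omega^m = m\,\omega^{m-1}\wedge i_v\omega$, with the paper's normalization $\mu=\omega^m/m$ (not $\omega^m/m!$) making the constants come out cleanly.
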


\begin{proof} 
The Lie algebra $\sympM$
is naturally isomorphic to the space of closed 1-forms $Z^1(M)$. Indeed,
the requirement for a field $v$ to be symplectic, 
$0=L_v\omega=i_vd\omega+di_v\omega=di_v\omega$, is equivalent 
to closedness of the 1-form $i_v\omega$, while
each closed 1-form on $M$ can be obtained as  $i_v\omega$
due to the nondegeneracy of the symplectic form $\omega$.

The isomorphism $\sympM\cong Z^1(M)$ implies that $\symp^*_\omega(M)\cong \Omega^{n-1}(M)/d\Omega^{n-2}(M)$
for the (regular) dual spaces with pairing given by wedging of differential forms.

The explicit expression for the inertia operator follows from 
the following transformations:
$$
E(v) =\frac {1}{2} \int_M(v,v)\,\frac{\omega^m}{m}
=\frac {1}{2m} \int_M (i_vv^\flat)\,\omega^m
=\frac {1}{2m} \int_M v^\flat\wedge \,i_v\omega^m
=\frac 12 \int_M v^\flat\wedge \,\omega^{m-1}\wedge \, i_v\omega\,.
$$
Hence the energy $E(v)=\frac 12 \langle v, Av\rangle$ is given by the inertia
operator $Av=[v^\flat\wedge \,\omega^{m-1}]$, due to the pairing 
\eqref{sympair} between symplectic fields and cosets on $(n-1)$-forms.

Note that for $m=1$, the 2D case, the inertia operator $A: v\mapsto [v^\flat]$ 
coincides with that for an ideal incompressible 2D fluid.
\end{proof}

\begin{remark} \upshape 
In representatives $(n-1)$-forms one gets the equation modulo an exact form:
\begin{equation}\label{symEuler}
\partial_t \alpha +L_v \alpha=-d\beta,
\end{equation}
where $\beta \in \Omega^{n-2}$ and by applying the inverse inertia operator 
one obtains the symplectic Euler equation.
One can show that each coset $[\alpha]$ has 
a unique 1-form $\bar \alpha\in [\alpha]$
related to a symplectic vector field by means of the metric.
\end{remark}


\subsubsection*{Symplectic vorticity}

\begin{defn}  
Define the {\rm symplectic vorticity} for a symplectic vector field $v$
to be the $n$-form 
$\xi:=d\alpha\in \Omega^{n}(M)$, where $n=2m=\dim M$, while the field $v$ 
and the $(n-1)$-form $\alpha$  
are related by the inertia operator: $[\alpha]=[v^\flat\wedge \,\omega^{m-1}]$.

The  {\rm symplectic vorticity function} $\nu$ 
is the  ratio $\nu:=d\alpha/\omega^m$ of the symplectic vorticity form 
and the symplectic volume. 
\end{defn}

\begin{prop}
Both the symplectic vorticity form $\xi$ and symplectic vorticity 
function $\nu$ are  transported by the symplectic flow.
\end{prop}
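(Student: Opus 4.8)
The plan is to reuse the ``curl kills the gradient'' mechanism that already appears in the incompressible case, now shifted up one degree in the de Rham complex. First I would write the symplectic Euler equation in representative form, exactly as in \eqref{symEuler}:
$$\partial_t \alpha + L_v \alpha = -d\beta, \qquad \beta\in\Omega^{n-2}(M).$$
Applying the exterior differential $d$ to both sides, I would use $d^2=0$ to annihilate the exact ``pressure-type'' term $d(d\beta)$, and the fact that $d$ commutes with the Lie derivative (immediate from the Cartan formula $L_v = d\,i_v + i_v\,d$, whence $dL_v = d\,i_v\,d = L_v d$) to pull the $d$ through. This gives
$$\partial_t(d\alpha) = -d\,L_v\alpha = -L_v\,(d\alpha).$$
Recognizing $\xi = d\alpha$, this is precisely the frozenness relation $\partial_t \xi = -L_v\xi$, which is the statement that the symplectic vorticity form is transported by the flow. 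Note that the whole computation descends automatically to the coset $[\alpha]$, since $\xi = d\alpha$ depends only on $[\alpha]\in\Omega^{n-1}(M)/d\Omega^{n-2}(M)$.

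For the vorticity function $\nu = d\alpha/\omega^m$ I would write $\xi = \nu\,\omega^m$ and differentiate in time. Because the symplectic form $\omega$ is fixed once and for all, $\partial_t(\omega^m)=0$, so $\partial_t\xi = (\partial_t\nu)\,\omega^m$. On the other side, the flow is symplectic, hence preserves the symplectic volume, so $L_v\omega^m = 0$; the Leibniz rule applied to $L_v(\nu\,\omega^m)$ then yields $-L_v\xi = -(L_v\nu)\,\omega^m$. Comparing these two expressions through the already established identity $\partial_t\xi = -L_v\xi$ and cancelling the nowhere-vanishing factor $\omega^m$ gives $\partial_t\nu = -L_v\nu$, the transport equation for the scalar $\nu$ (equivalently, its material derivative vanishes along the flow).

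I do not anticipate any serious obstacle: the argument is essentially bookkeeping in Cartan calculus. The only two points that genuinely carry the proof, and which I would flag explicitly, are that $d$ eliminates the term $d\beta$ (so the vorticity evolution is closed and representative-independent) and that $L_v\omega^m = 0$ precisely because $v$ is symplectic (which is what upgrades the frozenness of the \emph{form} $\xi$ to frozenness of the \emph{function} $\nu$). Both facts are immediate from the definitions, so the work reduces to assembling them in the order above.
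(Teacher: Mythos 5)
Your proof is correct and follows exactly the paper's argument: differentiate the representative form of the Euler equation, use $d^2=0$ and $dL_v=L_vd$ to get $\partial_t\xi=-L_v\xi$, then divide by the flow-invariant volume $\omega^m$ to transfer the statement to $\nu$. The only difference is that you spell out the Cartan-calculus steps the paper leaves implicit.
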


Indeed, by taking the differential of  
both sides of the Euler equation \eqref{symEuler}
we obtain the vorticity form of the symplectic  Euler equation 
$$
\partial_t \xi =-L_v \xi\,,
$$
which expresses the fact that the $n$-form $\xi=d\alpha$ 
is transported by the symplectic flow. The symplectic vorticity function
$\nu$ is also transported 
by the flow, just like in the ideal 2D case:
$$
\partial_t \nu =-L_v \nu\,,
$$
since the symplectic volume $\omega^m$ is invariant under the flow. 

This geometric observation allows one to extend Ebin's theorem to 
symplectic manifolds with metrics not necessarily compatible with 
symplectic structures.

\begin{cor}{\rm (cf. \cite{Ebin})}
The solutions of the symplectic Euler equation \eqref{symEuler} on
a closed Riemannian symplectic manifold $(M^{2m},\omega)$
in spaces $H^s$ with $s>\frac n2+1$ exist for all $t$ for any metric 
whose volume element coincides with the symplectic volume.
\end{cor}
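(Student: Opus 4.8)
The plan is to follow Ebin's strategy from the compatible case, using the symplectic vorticity function $\nu$ established in the preceding proposition as the pointwise transported quantity that supplies the crucial a priori control.  The key observation is that the proof in \cite{Ebin} never uses compatibility directly; it uses only the existence of a scalar vorticity $\nu$ satisfying $\partial_t\nu=-L_v\nu$ together with the fact that $v$ is recovered from $\nu$ by an elliptic inversion whose order of smoothing beats the order of differentiation in the transport equation.  Since the proposition shows $\nu$ is transported by the flow for \emph{any} metric with $\mathrm{vol}=\omega^m$, the whole argument should go through verbatim once the inversion $\nu\mapsto v$ is shown to be a bounded elliptic operator for an arbitrary such metric.

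First I would set up local existence in $H^s$ with $s>\frac n2+1$ via the standard Ebin--Marsden machinery \cite{EM}: write the symplectic Euler equation as an ODE $\dot g=F(g)$ on the Hilbert manifold $\SympM$ (the $H^s$-completion), where $F$ is the geodesic spray of the right-invariant $L^2$-metric.  Smoothness of the spray gives a time interval $[0,T)$ of existence depending only on the $H^s$-norm of the initial data, so the solution extends as long as $\|v(t)\|_{H^s}$ stays bounded.  The entire task then reduces to an a priori bound on this norm.

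The heart of the argument is to bound $\|v\|_{H^s}$ by controlling the vorticity.  Since $\nu$ is transported, $\|\nu(t)\|_{L^\infty}=\|\nu(0)\|_{L^\infty}$ (and more generally its $L^p$-norms, or the measure of its level sets, are conserved), giving an a priori $C^0$ bound on vorticity independent of $t$.  Now I must recover $v$ from $\nu$.  Unwinding the inertia operator $A:v\mapsto[v^\flat\wedge\omega^{m-1}]$ and the definition $\nu=d\alpha/\omega^m$ with $\alpha=v^\flat\wedge\omega^{m-1}$, one sees that $\nu$ is obtained from $v$ by a first-order differential operator, so $v^\flat$ is recovered from $\nu$ by a zeroth-order-net elliptic inversion (an order $-1$ smoothing after the $+1$ differentiation), exactly as in two dimensions where $v=\nabla^\perp\Delta^{-1}\nu$.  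Elliptic estimates for this inversion, valid for any fixed smooth metric on the closed manifold $M$, yield $\|v\|_{H^{s}}\lesssim\|\nu\|_{H^{s-1}}+\text{(lower order / harmonic part)}$; compatibility is nowhere needed, only ellipticity of the inversion, which holds because $\omega$ is nondegenerate and the metric is Riemannian.

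It then remains to propagate an $H^{s-1}$-bound on $\nu$ along the transport equation $\partial_t\nu=-L_v\nu=-(v\cdot\nabla)\nu$.  A standard energy estimate in $H^{s-1}$, using the commutator (Kato--Ponce / Moser) estimates together with the bound $\|v\|_{\mathrm{Lip}}\lesssim\|v\|_{H^s}\lesssim\|\nu\|_{H^{s-1}}$ from the elliptic inversion, gives a Grönwall-type inequality $\frac{d}{dt}\|\nu\|_{H^{s-1}}\lesssim\|v\|_{\mathrm{Lip}}\,\|\nu\|_{H^{s-1}}$.  Closing this differential inequality—here the conserved $L^\infty$-bound on $\nu$ is what prevents finite-time blow-up of the Lipschitz norm, as in Yudovich's and Wolibner's classical 2D arguments—yields a double-exponential bound on $\|\nu(t)\|_{H^{s-1}}$ finite for all $t$, hence on $\|v(t)\|_{H^s}$, and the local solution extends globally.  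The main obstacle, and the only place where one must check something genuinely new relative to \cite{Ebin}, is verifying that the vorticity-to-velocity inversion remains a well-posed elliptic problem of the correct order when the metric is \emph{not} compatible with $\omega$; once that ellipticity is confirmed the conservation of $\nu$ supplied by the proposition makes the remaining estimates identical to the compatible and two-dimensional cases.
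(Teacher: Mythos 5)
Your proposal is correct and follows essentially the same route as the paper: the only genuinely new input is the metric-independent transport of the symplectic vorticity $\nu$ from the preceding proposition, after which one runs Ebin's compatible-case argument (local existence \`a la \cite{EM}, elliptic recovery of $v$ from $\nu$, the conserved $L^\infty$ vorticity bound, and the 2D-style Gr\"onwall continuation) unchanged. The paper's own proof consists precisely of this reduction stated as a citation --- compatibility enters \cite{Ebin} only through the invariance of $\nu$ --- whereas you also spell out, correctly, the one point left tacit there, namely that the vorticity-to-velocity inversion remains elliptic for an arbitrary Riemannian metric, its principal symbol being a nonzero multiple of $|\omega^{-1}p|^2$.
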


\begin{proof}
The corresponding theorem is proved by D.~Ebin in \cite{Ebin}
for metrics $g$ compatible with the symplectic structure $\omega$, i.e. 
for which there is an almost complex structure $J$, 
so that $\omega(v,w)=g(Jv,w)$, see e.g. \cite{HZ}.

The proof is based on the existence of an invariant quantity, similar
to the vorticity of an ideal 2D fluid, which allowed one 
to reduce the existence questions in the symplectic case 
to similar questions in the 2D case and to adapt the corresponding 
2D long-time existence proof to the symplectic setting. 
The compatibility is used in the proof of the invariance of this quantity.

However, the symplectic vorticity $\nu$ is exactly the invariant 
quantity defined in the paper by D.~Ebin, and the above geometric point of view
proves its frozenness into symplectic fluid without the requirement of 
compatibility. 
\end{proof}

\begin{remark}\upshape
The transported symplectic vorticity also allows one to obtain 
infinitely many conserved quantities (Casimirs):
$$
I_k(\alpha)=\int_M\nu^k\,\omega^m\qquad {\text for~any~}\quad k=1,2,3,...,
$$
which are invariants of the symplectic Euler equation for any metric and symplectic form on $M$.
\end{remark}

\begin{remark}\upshape
Since symplectic vector fields form a Lie subalgebra in divergence-free ones,
$\sympM\subset\SVect(M)$ for $\mu=\omega^m/m$, there is the natural projection
of the dual spaces $\SVect^*\to\symp^*_\omega$, i.e. the projection
$\Omega^1/d\Omega^0\to \Omega^{n-1}/d\Omega^{n-2}$  given by $[u]\mapsto [u\wedge\omega^{m-1}]$. This projection respects the coadjoint action, while the vorticity 2-form of an ideal fluid under the projection becomes the symplectic vorticity.
This is yet one more way to check that the symplectic vorticity is frozen into a symplectic flow for any metric on $M$. 
\end{remark}

\medskip

\begin{remark}\upshape
We also mention the necessary changes to describe Hamiltonian fluids.
Now for a closed symplectic manifold $(M^{2m},\omega)$  consider the group of
Hamiltonian diffeomorphisms $G=\HamM$, i.e. those diffeomorphisms that are
attainable from the identity by Hamiltonian vector fields.

Its Lie algebra is $\mathfrak g=\hamM=\{v\in \Vect(M)~|~i_v\omega=dH\}$.
By definition, this Lie algebra is naturally isomorphic to the space 
of exact 1-forms,
$\hamM\cong d\Omega^0(M)\cong C^\infty(M)/\{ {\text constants} \}$.

Then its dual space is $\mathfrak g^*=\hamM^*=\Omega^{n-1}(M)/Z^{n-1}(M)$, i.e. the space
of all $(n-1)$-forms modulo closed ones. The coadjoint action is again
the Lie derivative ${\rm ad}^*_v=L_v$.
The energy and inertia operators are the same as for symplectic fluids.

Note that the dual space 
$\mathfrak g^*=\Omega^{n-1}(M)/Z^{n-1}(M)$ is naturally 
isomorphic to exact $n$-forms $d\Omega^{n-1}(M)$ on $M$, 
since for  $\alpha\in \Omega^{n-1}(M)$
the map $[\alpha]\mapsto d\alpha$ is an isomorphism.
Hence the Euler equation $\partial_t[\alpha]=-L_v[\alpha]$
is now {\it equivalent} to its vorticity formulation:
$$
\partial_t \nu=-L_v \nu\,,
$$ 
for the symplectic vorticity function $\nu=d\alpha/\omega^m$.

Finally, note that for a Hamiltonian field $v$ one can rewrite the above  equation 
with the help of the Poisson bracket on the symplectic manifold $(M,\omega)$ as follows
$$
\partial_t \nu=\{\psi,\nu\}\,,
$$ 
where $\psi$ is the Hamiltonian function for the velocity field $v$, 
while $\nu$ is its symplectic vorticity function and they are related
via $\Delta \psi=\nu$.
Indeed, the latter relation between $\psi$ and $\nu$ is equivalent to the
relation furnished by the inertia operator:
$dv^\flat\wedge\omega^{m-1}=\nu\cdot\omega^{m}$.
This shows that the stream-function formulation of the 2D Euler equation
is valid for the Hamiltonian fluid in any dimension.
\end{remark}

\subsubsection*{Hamiltonian steady flows}

Steady solutions ($\partial_t \nu=0$) 
to the  Euler equation for Hamiltonian fluids are given by 
those vector fields on $M$ whose Hamiltonians $\psi$ Poisson commute 
with their Laplacians $\nu=\Delta \psi$:
$$
\{\psi,\nu\}=0\,.
$$
For generic Hamiltonians in 2D this means that they are functionally 
dependent with their Laplacians, cf. \cite{arn}, the fact used by V.Arnold 
in the 60s to obtain stability conditions in ideal hydrodynamics.
In higher dimensions this merely means that that the two functions $\psi$ and 
$\nu=\Delta \psi$ are in involution with respect to the natural Poisson bracket on $M$.
For generic Hamiltonians in 4D this implies that the corresponding steady flows
represent integrable systems with two degrees of freedom, similarly to 
the case of an incompressible 4D fluid  studied in \cite{GK}.

For a Riemannian symplectic manifold $M$
consider the Dirichlet  functional 
$$
D(\psi):=\int_M({\rm sgrad~} \psi,{\rm sgrad~} \psi)\,\omega^m
$$ 
on Hamiltonian functions on $M$ obtained from a given function $\psi_0$ by the action
of Hamiltonian diffeomorphisms.

\begin{prop}
Smooth extremals (in particular, smooth minimizers) of the Dirichlet functional with respect to the action 
of the Hamiltonian diffeomorphisms on functions 
are given by Hamiltonians of steady vector fields, 
i.e. Hamiltonian functions satisfying
$$
\{\psi,\Delta\psi\}=0\,.
$$
\end{prop}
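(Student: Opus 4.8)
The plan is to compute the first variation of $D$ along the orbit of $\psi_0$ under the Hamiltonian diffeomorphism group and to show it vanishes for all admissible variations exactly when $\{\psi,\Delta\psi\}=0$. A tangent vector to the orbit at a function $\psi$ is produced by flowing $\psi$ along a Hamiltonian vector field $X_h=\mathrm{sgrad}\,h$ with arbitrary Hamiltonian $h$; differentiating $\psi_t=\psi\circ g_t$ at $t=0$ gives the infinitesimal variation $\dot\psi=\{h,\psi\}$. Since the integrand of $D$ is quadratic in $\mathrm{sgrad}\,\psi$ and $\mathrm{sgrad}$ is linear in $\psi$, the first variation is
$$
\delta D = 2\int_M(\mathrm{sgrad}\,\psi,\mathrm{sgrad}\,\dot\psi)\,\omega^m,\qquad \dot\psi=\{h,\psi\}.
$$

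Next I would rewrite the symmetric bilinear form $\langle\psi,\phi\rangle_D:=\int_M(\mathrm{sgrad}\,\psi,\mathrm{sgrad}\,\phi)\,\omega^m$ through the symplectic inertia operator $A\colon v\mapsto[v^\flat\wedge\omega^{m-1}]$. Writing $v=\mathrm{sgrad}\,\psi$ and $w=\mathrm{sgrad}\,\phi$, so that $i_v\omega=d\psi$ and $i_w\omega=d\phi$, the pointwise identity $(v,w)\,\omega^m=m\,w^\flat\wedge d\psi\wedge\omega^{m-1}$ (the polarization of the interior-product computation already carried out in the proof of the symplectic theorem) together with Stokes' formula on the closed manifold $M$ yields
$$
\langle\psi,\phi\rangle_D = -m\int_M d\psi\wedge(w^\flat\wedge\omega^{m-1}) = m\int_M\psi\,d(w^\flat\wedge\omega^{m-1}) = m\int_M\psi\,(\Delta\phi)\,\omega^m,
$$
where the last equality uses the defining relation $d\big((\mathrm{sgrad}\,\phi)^\flat\wedge\omega^{m-1}\big)=(\Delta\phi)\,\omega^m$ for the symplectic vorticity function. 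Because the left-hand side is manifestly symmetric in $\psi$ and $\phi$, this simultaneously shows that $\Delta$ is self-adjoint for the pairing $\int_M fg\,\omega^m$, so I may equally write $\langle\psi,\phi\rangle_D=m\int_M\phi\,(\Delta\psi)\,\omega^m$.

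Substituting $\phi=\{h,\psi\}$ gives $\delta D=2m\int_M\{h,\psi\}\,\Delta\psi\,\omega^m$. Here I would invoke the cyclic symmetry of the Poisson-bracket integral on a closed symplectic manifold, namely $\int_M\{a,b\}\,c\,\omega^m=\int_M a\,\{b,c\}\,\omega^m$, which itself follows from the Leibniz rule together with the invariance $\int_M\{f,g\}\,\omega^m=0$ (the Hamiltonian flow preserves $\omega^m$). Applying it with $a=h$, $b=\psi$, $c=\Delta\psi$ converts the variation into
$$
\delta D = 2m\int_M h\,\{\psi,\Delta\psi\}\,\omega^m.
$$
Finally, since $h$ ranges over all Hamiltonians, i.e. over all smooth functions modulo constants, and $\{\psi,\Delta\psi\}$ automatically has zero mean, the vanishing of $\delta D$ for every $h$ forces $\{\psi,\Delta\psi\}=0$; conversely this condition makes $\delta D$ vanish identically. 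Hence the smooth extremals are exactly the Hamiltonians of steady fields.

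I expect the main obstacle to be the middle step, namely correctly identifying the Riemannian Dirichlet bilinear form with the pairing $\int_M\psi\,(\Delta\phi)\,\omega^m$ built from the inertia operator, keeping careful track of the interior-product manipulations, the wedge-ordering signs, and the normalization $\mu=\omega^m/m$. The payoff is that the symmetry of the metric inner product delivers the self-adjointness of $\Delta$ for free, without any compatibility assumption between the metric and $\omega$; the remaining Poisson-bracket bookkeeping and the concluding density argument (using smoothness of the extremal) are then routine.
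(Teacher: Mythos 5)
Your proof is correct, but it takes a genuinely different route from the paper's. The paper disposes of the Proposition in three lines of Lie-theoretic machinery: it observes that $D$ is (up to a factor) the restriction of the right-invariant kinetic energy $E=\frac{1}{2m}\int_M(v,v)\,\omega^m$ to the adjoint orbit of $\mathrm{sgrad}\,\psi_0$ in the Lie algebra of Hamiltonian fields, invokes the general theorem of \cite{AK}, Section II.2.C, identifying extrema of the energy on adjoint orbits with extrema on the corresponding coadjoint orbits, and then uses the characterization of the latter as steady solutions of the Euler equation, which by the stream-function formulation established just before the Proposition are exactly the $\psi$ with $\{\psi,\Delta\psi\}=0$. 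You instead compute the first variation directly and arrive at $\delta D = 2m\int_M h\,\{\psi,\Delta\psi\}\,\omega^m$; the intermediate steps (the polarization of the energy identity $(v,w)\,\omega^m=m\,w^\flat\wedge i_v\omega\wedge\omega^{m-1}$, the Stokes integration, the cyclicity of $\int_M\{a,b\}\,c\,\omega^m$, and the zero-mean observation that closes the density argument) all check out. Your route is self-contained and produces the Euler--Lagrange equation explicitly, while the paper's buys brevity by citing the adjoint/coadjoint orbit correspondence as a black box. One convention worth flagging: you take $d\bigl((\mathrm{sgrad}\,\phi)^\flat\wedge\omega^{m-1}\bigr)=(\Delta\phi)\,\omega^m$ as the defining relation for $\Delta$, which is precisely the identification the paper makes in the remark preceding the Proposition and is what keeps the statement free of any compatibility assumption between the metric and $\omega$; if one insisted on the literal Laplace--Beltrami operator, your $\Delta$ would differ from it by a normalization (and, for non-compatible metrics, possibly more), but none of this affects the condition $\{\psi,\Delta\psi\}=0$.
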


\begin{proof}
Indeed, the Dirichlet  functional is (up to a factor)  the 
energy functional $E=\frac 1{2m}\int_M(v,v)\,\omega^m$ on Hamiltonian fields.
The variational problem described above is funding extrema of $E$ 
on the group adjoint orbit containing ${\rm sgrad~} \psi_0$  in the Lie algebra 
$\HamM$. These extrema are in 1-1 correspondence with extrema of 
the energy functional on the coadjoint orbits, see the general theorem 
in \cite{AK}, section II.2.C. In turn, extrema for the kinetic energy 
on coadjoint orbits 
in $\ham^*_\omega(M)$ are given by stationary Hamiltonian fields.
\end{proof}

Note that for metrics compatible with the symplectic structure
the above functional becomes the genuine Dirichlet  functional on functions:
$D(\psi):=\int_M(\nabla \psi, \nabla \psi)\,\omega^m$.
There also is a similar variational and direct descriptions for steady
symplectic fields, i.e. for the group $\SympM$.


\section{Classical and symplectic point vortices}
In this section we recall several facts about the classical problem 
of point vortices in the 2D plane and 
consider the symplectic analog of  point vortices
for higher-dimensional symplectic spaces.
For an ideal 2D fluid the systems of 2 and 3 point vortices are 
known to be completely integrable, while systems of $\ge 4$ point vortices are not.
It turns out that the corresponding evolution of symplectic vortices for $2m>2$ 
is integrable  for $N=2$ and presumably is non-integrable for $N\ge 3$.
This generalized system of symplectic vortices in a sense looks more
like a many-body problem in space, which is non-integrable already 
in three-body case.

Consider the 2D Euler equation  in the vorticity form:
$$
\dot \nu =\{\psi,\nu\}\,,
$$
where $\nu$ is the vorticity function and
the stream function (or Hamiltonian) $\psi$ of the flow satisfies
$\Delta \psi=\nu$. 
The same equation governs the evolution of the symplectic vorticity $\nu$
of a Hamiltonian fluid  on any symplectic manifold $(M^{2m},\omega)$,
where  the  symplectic vorticity $\nu$ and the Hamiltonian function
 $\psi$ of the flow are related in the same way: $\Delta \psi=\nu$ and
 $\Delta$ is the  Laplace-Beltrami operator, see the preceding section.

Now we consider the symplectic space $\R^{2m}$
with the standard symplectic structure 
$\omega=\sum_{\alpha=1}^m dx_\alpha\wedge dy_\alpha
=d\tilde x\wedge d\tilde y$. 
Let symplectic vorticity $\nu$ be supported in $N$ point vortices:
$\nu=\sum^N_{j=1} \Gamma_j\,\delta(\tilde z-\tilde z_j)$, where
$\tilde z_j=(\tilde x_j, \tilde y_j)$ 
are coordinates of the vortices on the space $\R^{2m}=\C^m$ with $m \ge 1$.

\begin{thm}
The evolution of vortices according to the Euler equation 
is described by the system
$$
\Gamma_j \dot x_{j,\alpha}
=\frac{\partial \mathcal H}{\partial y_{j,\alpha}},\qquad
\Gamma_j \dot y_{j,\alpha}
=-\frac{\partial \mathcal H}{\partial x_{j,\alpha}},\qquad 
1\le j\le N, \quad 1\le \alpha\le m\,.
$$
This is a Hamiltonian system on 
$(\R^{2m})^N$ with the Hamiltonian function
$$
\mathcal H=2\cdot C(2m)\sum^N_{j<k}\Gamma_j\Gamma_k \, |\tilde z_j-\tilde z_k|^{2-2m} 
\quad \text{ for } m>1\, \quad \text{ or }
$$
$$
\mathcal H=-\frac{1}{4\pi}\sum^N_{j<k}\Gamma_j\Gamma_k \, 
\ln |\tilde z_j-\tilde z_k|^{2} 
\quad \text{ for } m=1\,.
$$
\end{thm}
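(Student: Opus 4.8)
The plan is to reduce the statement to the two classical ingredients of point-vortex dynamics: solving the Poisson equation $\Delta\psi=\nu$ for the stream function, and then advecting each vortex by the resulting Hamiltonian velocity field. Since the preceding section establishes that the symplectic vorticity of a Hamiltonian fluid obeys $\partial_t\nu=\{\psi,\nu\}$ with $\Delta\psi=\nu$, I would first solve this elliptic equation on $\R^{2m}$ by convolving the concentrated vorticity $\nu=\sum_j\Gamma_j\,\delta(\tilde z-\tilde z_j)$ with the fundamental solution of the (flat) Laplace--Beltrami operator, obtaining
$$
\psi(\tilde z)=\sum_{j=1}^N \Gamma_j\, G(\tilde z-\tilde z_j),
$$
where $G$ is the Green's function of $\Delta$ on $\R^{2m}$. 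For $n=2m>2$ one has $G(\tilde w)=c_{2m}\,|\tilde w|^{2-2m}$ with $c_{2m}$ the explicit surface-area normalization $c_{2m}=1/\bigl((2-2m)\,\mathrm{vol}(S^{2m-1})\bigr)$, while for $m=1$ one gets the logarithmic kernel $G(\tilde w)=\tfrac{1}{2\pi}\ln|\tilde w|$; this dichotomy is precisely the source of the two cases in the statement.

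Next I would carry out the desingularization. The fluid velocity is $v={\rm sgrad}\,\psi$, so with respect to $\omega=d\tilde x\wedge d\tilde y$ a particle moves by $\dot x_\alpha=\partial_{y_\alpha}\psi$, $\dot y_\alpha=-\partial_{x_\alpha}\psi$. A point vortex is transported by the field generated by all the \emph{others}: the $j$-th summand $\Gamma_j\,G(\tilde z-\tilde z_j)$ is radially symmetric about $\tilde z_j$, so its symplectic gradient is everywhere tangent to the spheres centered at $\tilde z_j$ and induces no net displacement of $\tilde z_j$ itself. Dropping this self-interaction, the evolution becomes
$$
\dot x_{j,\alpha}=\sum_{k\neq j}\Gamma_k\,(\partial_{y_\alpha}G)(\tilde z_j-\tilde z_k),\qquad
\dot y_{j,\alpha}=-\sum_{k\neq j}\Gamma_k\,(\partial_{x_\alpha}G)(\tilde z_j-\tilde z_k).
$$

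It then remains to recognize these equations as a weighted Hamiltonian system, and I claim the correct Hamiltonian is simply the total pairwise interaction energy $\mathcal H=\sum_{j<k}\Gamma_j\Gamma_k\,G(\tilde z_j-\tilde z_k)$. Differentiating in $y_{j,\alpha}$, the two ordered occurrences of the index $j$ in the sum over $\{j<k\}$ combine—because $G$ is even, its gradient is odd—into a single unrestricted sum, giving $\partial_{y_{j,\alpha}}\mathcal H=\Gamma_j\sum_{k\neq j}\Gamma_k\,(\partial_{y_\alpha}G)(\tilde z_j-\tilde z_k)$, which is exactly $\Gamma_j$ times the right-hand side of the transport equation above; likewise for the $x$-derivatives. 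Hence $\Gamma_j\dot x_{j,\alpha}=\partial_{y_{j,\alpha}}\mathcal H$ and $\Gamma_j\dot y_{j,\alpha}=-\partial_{x_{j,\alpha}}\mathcal H$, the weights $\Gamma_j$ reflecting that the natural symplectic form on $(\R^{2m})^N$ is $\sum_j\Gamma_j\,dx_{j,\alpha}\wedge dy_{j,\alpha}$. Substituting $G(\tilde w)=2C(2m)|\tilde w|^{2-2m}$ (so that the constant $C(2m)$ is read off as half the fundamental-solution coefficient $c_{2m}$) for $m>1$, and $G(\tilde w)=-\tfrac{1}{4\pi}\ln|\tilde w|^2$ for $m=1$, recovers the two displayed formulas.

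The main obstacle is the desingularization step: the pairing $\{\psi,\nu\}$ tests the singular kernel $\psi$ against derivatives of delta functions, so the claim that a vortex feels only the field of the others must be justified rather than assumed. In the planar case this is the content of Kirchhoff--Routh theory, and the same argument transfers here because the self-kernel $G(\tilde z-\tilde z_j)$ is exactly radial, so its symplectic gradient is orthogonal to the radial direction and contributes nothing to the motion of the center; making this rigorous amounts to testing the distributional identity $\partial_t\nu=\{\psi,\nu\}$ against smooth compactly supported functions and checking that the regularized self-terms vanish in the limit. Fixing the precise value of $C(2m)$ and the overall signs—which depend on the Laplace--Beltrami and Poisson-bracket sign conventions—is then a routine bookkeeping matter.
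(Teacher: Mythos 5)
Your proposal is correct and follows essentially the same route as the paper: invert the Laplacian via the fundamental solution to get $\psi=\Delta^{-1}\nu$, pass from the Liouville form $\dot\nu=\{\psi,\nu\}$ to particle advection of the delta-supported vorticity, discard the singular self-interaction term, and verify by direct differentiation that the resulting system is Hamiltonian for $\mathcal H$ with respect to the $\Gamma_j$-weighted bracket. Your added remarks --- the radial-symmetry justification for dropping the self-term and the explicit bookkeeping relating $C(2m)$ to the fundamental-solution coefficient --- only make more precise the steps the paper treats as ``straightforward differentiation,'' so no substantive difference remains.
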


Here the distance $|\tilde z_j-\tilde z_k|$ is defined in $(\R^{2m})^N$, 
the constant $C(2m)$ is the constant of the Laplace fundamental solution
in $\R^{2m}$, and the Poisson structure given by the bracket
$$
\{f,g\}=\sum^N_{j=1}\frac{1}{\Gamma_j}
\left( \frac{\partial f}{\partial \tilde x_j}
\frac{\partial g}{\partial \tilde y_j}
-\frac{\partial f}{\partial \tilde y_j}
\frac{\partial g}{\partial \tilde x_j}\right)
=\sum^N_{j=1}\frac{1}{\Gamma_j}\sum^m_{\alpha=1}
\left( \frac{\partial f}{\partial x_{j,\alpha}}
\frac{\partial g}{\partial y_{j,\alpha}}
-\frac{\partial f}{\partial y_{j,\alpha}}
\frac{\partial g}{\partial x_{j,\alpha}}\right)\,.
$$

The case $m=1$ goes back to Kirchhoff. 
The case $m>1$ apparently did not appear in the literature before.

\begin{proof}  Any (non-autonomous) Hamiltonian equation
$\dot z=\text{ sgrad }H(t,z)$ in a symplectic manifold $M$ 
has an alternative (Liouville)
version: $\dot \rho=\{H,\rho\}$ for any smooth function $\rho$ on $M$.
The reduction of the Liouville version to the Hamiltonian one is obtained by taking the limit
as $\rho$ tends to a delta-function supported at a given point $z\in M$.

In particular,  the vorticity equation
$\dot \nu =\{\psi,\nu\}$ describes a Hamiltonian equation on
$\R^{2m}$ with instantaneous Hamiltonian function $\psi$. By assuming 
that $\nu$ is of the form  
$\nu=\sum^N_{j=1} \Gamma_j\,\delta(\tilde z-\tilde z_j),\quad \tilde z\in \R^{2m}$, 
one obtains the  instantaneous Hamiltonian
$$
\psi=\Delta^{-1}\nu=C(2m)\sum^N_{j=1}\Gamma_j \,|\tilde z-\tilde z_j|^{2-2m}\,.
$$
Thus the corresponding Hamiltonian form of the vorticity equation 
for point vortices
is $\dot {\tilde z}_j= \text{ sgrad } \psi|_{\tilde z=\tilde z_j}$.
Now the straightforward differentiation of $\psi$ at $\tilde z=\tilde z_j$ 
(in which one discards the singular term at $\tilde z_j$ itself) 
shows that the equation for 
$\dot {\tilde z}_j$ coincides with the Hamiltonian vector field 
for the Hamiltonian $\mathcal H$ on $(\R^{2m})^N$.

For instance, for the standard complex structure $\mathbf J$ 
in $\R^{2m}=\C^m$, $m>1$ one has 
$$
 \text{ sgrad } \psi|_{\tilde z=\tilde z_j}
=\mathbf J \text{ grad }|_{\tilde z=\tilde z_j}\left(C(2m)\cdot\sum^N_{k=1, k\not=j}\Gamma_k \,|\tilde z-\tilde z_k|^{2-2m}\right)
=\frac{1}{\Gamma_j}\,\mathbf J\,\frac{\partial \mathcal H}{\partial \tilde z_j}\,.
$$
as required.
\end{proof}

\begin{thm}
The above dynamical system of symplectic vortices 
is invariant with respect to the group $E(2m):=U(m)\ltimes \R^{2m}$ 
of unitary motions of $\R^{2m}=\C^m$. 
The corresponding $m^2+2m$ conserved quantities, 
which commute with $\mathcal H$,
 are:
$$
Q_\alpha=\sum^N_{j=1} \Gamma_j x_{j,\alpha}\,, \qquad
P_\alpha=\sum^N_{j=1} \Gamma_j  y_{j,\alpha}\,\qquad\text{~for~} 1\le \alpha\le m\,,
$$
$$
F^+_{\alpha\beta}
=\sum^N_{j=1} \Gamma_j(x_{j,\alpha}x_{j,\beta}
  +y_{j,\alpha}y_{j,\beta})\,\qquad\text{~for~} 
 1\le \alpha\le\beta\le m\,,\quad\text{~and~}
$$
$$ 
F^-_{\alpha\beta}=\sum^N_{j=1} \Gamma_j(x_{j,\alpha}y_{j,\beta}
                        -x_{j,\beta}y_{j,\alpha})\qquad\qquad\text{~for~} 
 1\le \alpha < \beta\le m\,.
$$
\end{thm}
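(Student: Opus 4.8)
The plan is to identify the listed quantities as the components of the moment map for the natural action of $E(2m)=U(m)\ltimes\R^{2m}$ on the phase space $(\R^{2m})^N$, and then to read off their conservation from the $E(2m)$-invariance of $\mathcal H$. Here $E(2m)$ acts diagonally: a unitary motion $\tilde z\mapsto U\tilde z+a$, with $U\in U(m)$ and $a\in\C^m$, is applied simultaneously to every vortex $\tilde z_j$. First I would check that this action preserves the Poisson bracket of the theorem. That bracket corresponds to the symplectic form $\sum_j\Gamma_j\sum_\alpha dx_{j,\alpha}\wedge dy_{j,\alpha}$, a sum over the factors $\C^m$ of the standard K\"ahler form weighted by $\Gamma_j$; each summand is preserved by $U(m)\ltimes\R^{2m}$, since unitary maps preserve $\sum_\alpha dx_\alpha\wedge dy_\alpha$ and translations preserve it too. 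Next I would note that $\mathcal H$ is a function of the pairwise Euclidean distances $|\tilde z_j-\tilde z_k|$ alone, and these are invariant under unitary motions; hence $\mathcal H$ is $E(2m)$-invariant.

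The core of the proof is to match the Hamiltonian vector fields of the listed functions with the infinitesimal $E(2m)$-action. I would compute $\{Q_\alpha,\cdot\}$, $\{P_\alpha,\cdot\}$, $\{F^+_{\alpha\beta},\cdot\}$ and $\{F^-_{\alpha\beta},\cdot\}$ directly from the given bracket. The decisive mechanism is that the factor $\Gamma_j$ carried by each quantity cancels the $1/\Gamma_j$ in the bracket, so the resulting Hamiltonian vector field acts by the same affine or linear map on every vortex, i.e. diagonally (this is exactly what is needed for the flow to preserve the differences $\tilde z_j-\tilde z_k$). Writing $z_{j,\alpha}=x_{j,\alpha}+iy_{j,\alpha}$, one finds that $Q_\alpha$ and $P_\alpha$ generate the translations $\sum_j\partial_{y_{j,\alpha}}$ and $-\sum_j\partial_{x_{j,\alpha}}$; that $F^+_{\alpha\alpha}$ generates the phase rotation $z_{j,\alpha}\mapsto e^{i\theta}z_{j,\alpha}$; that $F^-_{\alpha\beta}$ generates the real rotation $\dot z_{j,\alpha}=z_{j,\beta}$, $\dot z_{j,\beta}=-z_{j,\alpha}$ (the generator $E_{\alpha\beta}-E_{\beta\alpha}$); and that $F^+_{\alpha\beta}$ with $\alpha<\beta$ generates $\dot z_{j,\alpha}=iz_{j,\beta}$, $\dot z_{j,\beta}=iz_{j,\alpha}$ (the generator $i(E_{\alpha\beta}+E_{\beta\alpha})$).

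These generators exhaust $\mathfrak u(m)\ltimes\R^{2m}$: the $2m$ functions $Q_\alpha,P_\alpha$ give the translations, while the $\tfrac12 m(m+1)$ functions $F^+_{\alpha\beta}$ with $\alpha\le\beta$ together with the $\tfrac12 m(m-1)$ functions $F^-_{\alpha\beta}$ with $\alpha<\beta$ form a basis of the $m^2$-dimensional algebra $\mathfrak u(m)$, giving $m^2+2m$ quantities in all. Since the Hamiltonian flow of each such $F$ is a one-parameter subgroup of $E(2m)$ and $\mathcal H$ is $E(2m)$-invariant, Noether's theorem yields $\{F,\mathcal H\}=X_F\mathcal H=0$, so each $F$ is a first integral commuting with $\mathcal H$. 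The only genuine obstacle is organizational: verifying that the index ranges $\alpha\le\beta$ and $\alpha<\beta$ produce exactly a basis of $\mathfrak u(m)$, and tracking the $\Gamma_j$-weights so that the flows come out diagonal; once the cancellation of $\Gamma_j$ against $1/\Gamma_j$ is noted, each identification is a one-line computation. For $m=1$ the group degenerates to $SO(2)\ltimes\R^2$ and the three integrals $Q,P,F^+_{11}$ recover the classical linear and angular impulses of planar point vortices.
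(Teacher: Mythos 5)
Your proposal is correct and follows essentially the same route as the paper: both identify the listed functions as the moment-map components generating the diagonal $E(2m)=U(m)\ltimes\R^{2m}$ action (with the $\Gamma_j$-weights cancelling against the $1/\Gamma_j$ in the bracket), observe that $\mathcal H$ and the Poisson structure are invariant under unitary motions, and conclude by Noether; the paper merely packages the $\mathfrak u(m)$ part via the complex functionals $F_{\alpha\beta}=-iz_\alpha\bar z_\beta$ and $R_\alpha=z_\alpha$, of which your $F^\pm_{\alpha\beta}$, $Q_\alpha$, $P_\alpha$ are the real and imaginary parts.
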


\begin{proof}  The standard complex structure $\mathbf J$ in $\R^{2m}=\C^m$ 
is compatible with the symplectic and Euclidean structures on the space. 
Since the motions $E(2m):=U(m)\ltimes \R^{2m}$ 
preserve the Euclidean and complex structures,
they also preserve the symplectic one and hence preserve the equation, 
which is defined in terms of these structures.
The above quantities $Q_\alpha$ and $P_\alpha$ are generators of translations in the plane $(x_\alpha,y_\alpha)$, 
while $F^\pm_{\alpha\beta}$ generate unitary rotations in the space 
$( x_\alpha,y_\alpha, x_\beta,y_\beta)$.

Indeed, rewrite the Poisson structure and integrals in the $(z,\bar z)$-variables for $z=x+iy$. Namely, for  the Poisson structure
$2\frac{\partial}{\partial x}\wedge \frac{\partial}{\partial y}
=i\frac{\partial}{\partial \bar z}\wedge \frac{\partial}{\partial z}$
consider, respectively, the quadratic and linear functions 
$$ 
F_{\alpha\beta}:=-iz_\alpha\bar z_\beta \qquad \text{~and~}\qquad 
R_\alpha:=z_\alpha
$$ 
for $ 1\le \alpha,\beta\le m$. 
Note that 
$$\overline F_{\beta\alpha}=-F_{\alpha\beta} \qquad\text{~and~}\qquad
\{F_{\alpha\beta},F_{\beta\gamma}\}=F_{\alpha\gamma}\,,
$$
that is the Hamiltonians $F_{\alpha\beta}$ generate the unitary Lie
algebra $u(m)$ with respect to the Poisson bracket.
The corresponding Hamiltonian fields for $R_\alpha$ 
generate translations in the $\C$-lines $(z_\alpha)$,
while $F_{\alpha\beta}$ generate 
unitary rotations in the $\C$-planes  $(z_\alpha, z_\beta)$ preserving 
the norm $\langle z,z\rangle:=\sum_\alpha z_\alpha\bar z_\alpha$.
Assuming the summation in $j=1,...,N$ with weights $\Gamma_j$ we see
that $Q_\alpha$ and $P_\alpha$ are real and imaginary parts for $R_\alpha$,
while $F^\pm_{\alpha\beta}$ so are for the complex functionals 
$F_{\alpha\beta}$.

Since the corresponding Hamiltonian flows yield unitary motions of the 
space $\C^m$, the corresponding transformations commute 
with the equation of symplectic point vortices. 
\end{proof}

\begin{remark}  \upshape 
As we mentioned, the quadratic functionals $F_{\alpha\beta}$ 
form the Lie algebra $u(m)$ with respect to the 
Poisson bracket, while $Q_\alpha$ and $P_\alpha$ form $\R^{2m}$.
Furthermore, since $\{Q_\alpha,P_\alpha\}=\sum^N_{j=1} \Gamma_j$,
together the functionals $F_{\alpha\beta},Q_\alpha,$ and $P_\alpha$ form the central extension 
of the Lie algebra for the semi-direct product 
group $E(2m)=U(m)\ltimes \R^{2m}$ of unitary motions.
\end{remark}

One can use the above functionals  to construct  involutive integrals.

\begin{cor}\label{invint}
Functionals ${\mathcal H}, \,F^+_{\alpha\alpha}$ and $Q_\alpha^2+P_\alpha^2$ for $1\le \alpha\le m$  
provide $2m+1$ integrals in involution on $(\R^{2m})^N$.
\end{cor}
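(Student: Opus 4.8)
The plan is to check three things: that each listed functional is a first integral of the vortex system, that they pairwise Poisson commute, and that there are $2m+1$ of them. The count is immediate, since the list consists of $\mathcal H$, the $m$ functionals $F^+_{\alpha\alpha}$, and the $m$ functionals $Q_\alpha^2+P_\alpha^2$, giving $1+m+m=2m+1$. The key structural observation I would exploit is that the Poisson bracket splits as a direct sum over the complex lines: since the bivector equals $\sum_{j,\alpha}\Gamma_j^{-1}\,\partial_{x_{j,\alpha}}\wedge\partial_{y_{j,\alpha}}$, it pairs $x_{j,\alpha}$ only with $y_{j,\alpha}$ for the same index $\alpha$. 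Hence any two functionals built from coordinates of different lines $\alpha\neq\beta$ automatically Poisson commute.

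For the integrals, I would invoke the preceding theorem, which gives $\{\mathcal H, Q_\alpha\}=\{\mathcal H, P_\alpha\}=\{\mathcal H, F^+_{\alpha\beta}\}=0$. Thus each $F^+_{\alpha\alpha}$ is conserved, and by the Leibniz rule $\{\mathcal H, Q_\alpha^2+P_\alpha^2\}=2Q_\alpha\{\mathcal H,Q_\alpha\}+2P_\alpha\{\mathcal H,P_\alpha\}=0$, so $Q_\alpha^2+P_\alpha^2$ is conserved as well; this shows $\mathcal H$ commutes with every functional on the list. For involution among the remaining functionals, the line-splitting above kills all cross-line brackets: $\{F^+_{\alpha\alpha},F^+_{\beta\beta}\}$, $\{Q_\alpha^2+P_\alpha^2,Q_\beta^2+P_\beta^2\}$, and $\{F^+_{\alpha\alpha},Q_\beta^2+P_\beta^2\}$ all vanish whenever $\alpha\neq\beta$, and the same-functional brackets vanish trivially. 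Equivalently, the $F^+_{\alpha\alpha}$ are the Cartan generators of the algebra $u(m)$ of the preceding remark, hence mutually commuting.

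The one substantive bracket is $\{Q_\alpha^2+P_\alpha^2,F^+_{\alpha\alpha}\}$ within a single line $\alpha$, and here the computation reduces exactly to the classical two-dimensional point-vortex identity. Restricted to line $\alpha$, the functionals $Q_\alpha=\sum_j\Gamma_j x_{j,\alpha}$ and $P_\alpha=\sum_j\Gamma_j y_{j,\alpha}$ are the components of the center of vorticity, while $F^+_{\alpha\alpha}=\sum_j\Gamma_j(x_{j,\alpha}^2+y_{j,\alpha}^2)$ is the moment of inertia. A direct computation gives $\{Q_\alpha,F^+_{\alpha\alpha}\}=2P_\alpha$ and $\{P_\alpha,F^+_{\alpha\alpha}\}=-2Q_\alpha$, whence $\{Q_\alpha^2+P_\alpha^2,F^+_{\alpha\alpha}\}=2Q_\alpha\{Q_\alpha,F^+_{\alpha\alpha}\}+2P_\alpha\{P_\alpha,F^+_{\alpha\alpha}\}=4Q_\alpha P_\alpha-4P_\alpha Q_\alpha=0$. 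This is the step I expect to carry the real content, being the higher-dimensional incarnation of the planar fact that the squared center of vorticity Poisson commutes with the moment of inertia. Collecting all cases, the $2m+1$ functionals are conserved and pairwise commute, which proves the corollary; one may additionally verify that for large $N$ they are generically functionally independent, although only involution is asserted here.
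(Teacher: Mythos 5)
Your proof is correct and takes essentially the same route as the paper, whose entire argument is to record the commutation relations $\{Q_\alpha,F^+_{\alpha\alpha}\}=2P_\alpha$ and $\{P_\alpha,F^+_{\alpha\alpha}\}=-2Q_\alpha$ and leave the rest implicit. You have merely spelled out the routine steps the paper omits: conservation via the preceding theorem, the splitting of the Poisson bracket over the complex lines $\alpha$, and the Leibniz computation $\{Q_\alpha^2+P_\alpha^2,F^+_{\alpha\alpha}\}=4Q_\alpha P_\alpha-4P_\alpha Q_\alpha=0$.
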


This follows from the commutation relations
$\{P_\alpha,F^+_{\alpha\alpha}\}=-2Q_\alpha$ and 
$\{Q_\alpha,F^+_{\alpha\alpha}\}=2P_\alpha$.

\begin{cor}
(a) In 2D (i.e. for $m=1$) the problems of $N=2$ and $N=3$ point vortices 
are completely integrable.

(b) The $N=2$ symplectic vortex problem in $\R^{2m}$ 
is integrable for any dimension $2m$.
\end{cor}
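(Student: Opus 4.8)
The plan is to establish Arnold--Liouville integrability by matching the number of degrees of freedom of the phase space against the commuting integrals produced in Corollary~\ref{invint}. The space $(\R^{2m})^N$ carries the nondegenerate symplectic form dual to the given Poisson bracket (all $\Gamma_j\neq 0$), so it has dimension $2mN$ and $mN$ degrees of freedom; complete integrability thus requires $mN$ functionally independent integrals in involution. Corollary~\ref{invint} already supplies $2m+1$ integrals in involution, namely $\mathcal H$, the $m$ functionals $F^+_{\alpha\alpha}$, and the $m$ functionals $Q_\alpha^2+P_\alpha^2$. Since involutivity is granted and the dimension count is immediate, the whole argument reduces to selecting the correct number of \emph{functionally independent} integrals in each case, which is the main point to check.

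For part (b), where $N=2$ and the target number is $mN=2m$, I would use the $2m$ functionals $\{F^+_{\alpha\alpha},\,Q_\alpha^2+P_\alpha^2\}_{\alpha=1}^m$. The key observation is that these separate across the $m$ symplectic planes: both $F^+_{\alpha\alpha}$ and $Q_\alpha^2+P_\alpha^2$ depend only on the four coordinates $(x_{1,\alpha},y_{1,\alpha},x_{2,\alpha},y_{2,\alpha})$, so the Jacobian of the whole family is block diagonal in $\alpha$. Within one block, passing to $w_j=x_{j,\alpha}+iy_{j,\alpha}$ and using the elementary invariants $|w_1|^2$, $|w_2|^2$, $\operatorname{Re}(w_1\bar w_2)$, one checks that $F^+_{\alpha\alpha}$ and $Q_\alpha^2+P_\alpha^2$ are functionally independent on a generic configuration, whence all $2m$ functionals are independent and the system is integrable. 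The same planar computation yields the identity $Q_\alpha^2+P_\alpha^2=(\Gamma_1+\Gamma_2)F^+_{\alpha\alpha}-\Gamma_1\Gamma_2\,r_\alpha^2$, with $r_\alpha^2=(x_{1,\alpha}-x_{2,\alpha})^2+(y_{1,\alpha}-y_{2,\alpha})^2$; summing over $\alpha$ expresses $|\tilde z_1-\tilde z_2|^2$, and hence $\mathcal H$, through the chosen integrals. This explains why for $N=2$ the energy is redundant and the $2m+1$ commuting integrals collapse to exactly $2m$ independent ones.

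For part (a) I specialize to $m=1$, so $mN=N$ and Corollary~\ref{invint} gives the three integrals $\mathcal H$, $F^+_{11}$, $Q_1^2+P_1^2$. For $N=2$ the target is $2$, and the planar argument above shows $F^+_{11}$ and $Q_1^2+P_1^2$ are independent and in involution, so the two-vortex system is integrable (equivalently one may use $\mathcal H$ and $F^+_{11}$). For $N=3$ the target is $3$, so all three integrals are needed; here $\mathcal H$ is \emph{not} redundant, since for three vortices it depends on all three mutual distances $|\tilde z_j-\tilde z_k|$, a genuinely new invariant beyond the translation and rotation data recorded by $F^+_{11}$ and $Q_1^2+P_1^2$. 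I would confirm independence by evaluating the $3\times 6$ Jacobian of $(\mathcal H,F^+_{11},Q_1^2+P_1^2)$ at a generic triangular configuration and exhibiting a nonvanishing $3\times 3$ minor.

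The main obstacle is exactly this functional-independence bookkeeping: deciding, in each regime, which of the $2m+1$ commuting integrals survive as independent. The subtle point is that the status of $\mathcal H$ flips --- it is a function of the moment-type integrals when $N=2$ and must be discarded, but it supplies the missing third integral for the three-vortex problem. Some care is also needed at degenerate configurations (collisions, collinear or symmetric arrangements, or coincidences among the $\Gamma_j$), where the Jacobian can drop rank; integrability is asserted on the open dense set of regular configurations.
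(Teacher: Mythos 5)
Your argument is correct and follows essentially the same route as the paper: Arnold--Liouville integrability by matching the involutive integrals of Corollary~\ref{invint} against the $mN$ degrees of freedom of $(\R^{2m})^N$, which is exactly the paper's two-sentence dimension count. The extra bookkeeping you supply is sound and fills in what the paper leaves implicit --- in particular the identity $Q_\alpha^2+P_\alpha^2=(\Gamma_1+\Gamma_2)F^+_{\alpha\alpha}-\Gamma_1\Gamma_2\,r_\alpha^2$, which explains why for $N=2$ the Hamiltonian $\mathcal H$ is functionally dependent on the moment integrals and the $2m+1$ commuting quantities collapse to precisely the $2m$ independent ones required, resolving the apparent excess in the paper's phrase ``$2m+1$ integrals are sufficient.''
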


Indeed, for $m=1$ the phase space of $N=3$ vortices is 6 dimensional, 
while one has $2m+1=3$ independent involutive integrals.

For any dimension $m$ 
and $N$ vortices the phase space is $(\R^{2m})^N$. For integrability one 
needs $m\cdot N$ integrals in involution. Thus $2m+1$ 
integrals ${\mathcal H}, \,F^+_{\alpha\alpha}$ and $Q_\alpha^2+P_\alpha^2$
are sufficient for integrability of 
$N=2$ symplectic vortices for any $m$.

\begin{remark}  \upshape 
Note that the evolution of $N\ge 4$
point vortices in 2D is non-integrable \cite{Ziglin}.
When $m>1$ already for $N=3$ one does not have enough integrals 
for Arnold-Liouville integrability: for $m=2$ 
one has 5 integrals, but integrability requires 6.
\end{remark}

%

\begin{conjecture} 
The system of $N=3$ symplectic point vortices is  not completely 
integrable on $(\R^{2m})^N$ for $m>1$ in the Arnold-Liouville sense.
\end{conjecture}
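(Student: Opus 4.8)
The plan is to establish non-integrability by the differential Galois (Morales--Ramis) approach, the rigorous descendant of the Ziglin method already invoked for the planar case with $N\ge 4$. The first step is to exploit the symmetry established above: the group $E(2m)=U(m)\ltimes\R^{2m}$ acts on the phase space $(\R^{2m})^3$, furnishing the integrals $\mathcal H,\,F^+_{\alpha\alpha}$ and $Q_\alpha^2+P_\alpha^2$. Performing symplectic (Marsden--Weinstein) reduction with respect to this action produces a lower-dimensional Hamiltonian system on which the conjecture reduces to the nonexistence of any further meromorphic first integral functionally independent of the reduced Hamiltonian. The clean invariant form of the obstruction is the count already noted in the preceding remark: for $m>1$ the involutive integrals $\mathcal H,\,F^+_{\alpha\alpha},\,Q_\alpha^2+P_\alpha^2$ number $2m+1$, whereas Arnold--Liouville integrability on $(\R^{2m})^3$ requires $3m$, so exactly the missing family of $m-1$ integrals must be ruled out.

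The second step is to produce an explicit particular solution along which the variational analysis can be carried out, by analogy with the homographic solutions of the $N$-body problem and the rigidly rotating relative equilibria of planar point vortices. Since the pairwise interaction $|\tilde z_j-\tilde z_k|^{2-2m}$ is homogeneous, I expect self-similar configurations of three symplectic vortices that evolve by a one-parameter subgroup of $E(2m)$ (a unitary rotation in some $\C$-plane, possibly combined with a self-similar scaling), so that in suitable rotating coordinates the motion becomes stationary or reduces to a single complex ODE whose time dependence is governed by an elementary algebraic or elliptic curve, exactly as in celestial mechanics.

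The third step is the core of the argument: linearize the reduced flow along this particular solution, split off the directions tangent to the known integrals to obtain the normal variational equations, and compute the identity component $G^0$ of their differential Galois group over the field generated by the particular solution. By the Morales--Ramis theorem, if $G^0$ is non-abelian then the system admits no complete set of meromorphic first integrals in involution, which proves the conjecture. For $m=2$ I expect the normal variational system, after decoupling, to reduce to one or more second-order linear ODEs amenable to Kovacic's algorithm; exhibiting a single factor with non-solvable, hence non-abelian, Galois group would already settle that case.

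The hard part will be this last step. Finding a particular solution is routine given the homogeneity, but making the normal variational equations explicit after reduction, and then proving that their differential Galois group is non-abelian, is the genuine obstacle---precisely where the $N$-body and $N\ge 4$ vortex proofs become technical. A further difficulty is uniformity in $m$: one would like a single argument valid for all $m>1$, but it is more likely that one must first settle $m=2$ by an explicit Galois or monodromy computation and then argue that increasing $m$ only enlarges the Galois group, since the additional degrees of freedom contribute extra variational blocks without creating new integrals, so that non-integrability persists. Establishing this monotonicity rigorously, or else treating each $m$ directly, is the principal remaining challenge.
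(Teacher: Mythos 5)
The statement you are addressing is labelled a \emph{conjecture} in the paper, and the paper offers no proof of it --- only two pieces of motivation: the integral count (for $m>1$ and $N=3$ the involutive integrals $\mathcal H$, $F^+_{\alpha\alpha}$, $Q_\alpha^2+P_\alpha^2$ number $2m+1$ while Arnold--Liouville integrability on $(\R^{2m})^3$ requires $3m$), and the geometric observation that for $N=3$ the velocities $\text{sgrad}\,\psi(\tilde z_j)$ need not lie in a common plane through the three vortices, so the dynamics is genuinely higher-dimensional. Your first step reproduces exactly this counting heuristic, which is consistent with the paper but is, as the paper itself acknowledges by calling the statement a conjecture, not evidence of non-integrability: a shortage of \emph{known} integrals never rules out the existence of further hidden ones.

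The rest of your proposal is a reasonable research program --- Morales--Ramis along a homographic-type particular solution is indeed the standard modern route for such problems --- but it is not a proof, and you say so yourself. Every load-bearing step is deferred: you do not exhibit the particular solution (its existence for the interaction $|\tilde z_j-\tilde z_k|^{2-2m}$ with general vortex strengths $\Gamma_j$ is plausible from homogeneity but not established), you do not write down the normal variational equations after reduction, and you do not compute the differential Galois group or show its identity component is non-abelian. The concluding ``monotonicity in $m$'' claim is also unsupported: adding degrees of freedom adds variational blocks, but non-abelianness must be verified for the correctly identified \emph{normal} part after quotienting by the (larger, for larger $m$) symmetry algebra, and nothing guarantees this persists. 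Finally, note a mismatch of categories: Morales--Ramis obstructs \emph{meromorphic} first integrals, whereas Arnold--Liouville integrability as usually stated allows smooth ones, so even a completed Galois computation would prove a formally weaker statement unless you restrict the class of integrals. In short, the conjecture remains open both in the paper and in your proposal; what you have written is a sensible plan of attack, not a proof.
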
 

Note that for any two point vortices in  $\R^{2m}$, their velocities, given by 
$\text{ sgrad } \psi(\tilde z)$ at $\tilde z=\tilde z_j$ for $j=1,2$ lie in a fixed two-dimensional plane depending on the initial positions $\tilde z_1, \tilde z_2$. Thus for
$N=2$ the dynamics reduces to the 2D case. The motivation for the above conjecture is that for 
$N=3$ the vectors $\text{ sgrad } \psi(\tilde z_j)$ at $\tilde z_j$ do not necessarily lie in one and the same plane passing through the vortices $\tilde z_1, \tilde z_2, \tilde z_3 $ once $m>1$, i.e.
the problem becomes indeed higher-dimensional. 

In a sense, the systems of symplectic point vortices is somewhat similar to
many-body problem in higher dimensions, cf. \cite{rmont}. 
It would be interesting to describe the cases when 
the symplectic vortex problem is 
weakly integrable on $\R^{2m}$ in the sense of \cite{Butler} thanks to a large number of conserved quantities that are not in involution. 

\medskip

\begin{remark}  \upshape 
The evolution of point vortices on the sphere $S^2$ or 
the hyperbolic plane ${\mathbb H}^2$ is invariant for the groups
$SO(3)$ and $SO(2,1)$ respectively. The corresponding problems of 
$N\le 3$ vortices are integrable \cite{Kimura}. Furthermore,
one can make the first integrals in these cases deform to each other by 
tracing the change of curvature for the corresponding
symplectic manifolds (see the case of $SU(2)$, $E(2)$ and $SU(1,1)$ for $m=1$ in
\cite{Tadashi}). 

Similarly, one can consider the evolution of symplectic 
point vortices on the projective space $\mathbb{CP}^m$ or 
other homogeneous symplectic spaces with invariance with respect to the groups $SU(m)$ or $SU(k,l)$. 

Note that for a compact symplectic manifold $M^{2m}$ 
one needs to normalize the vorticity supported on $N$ point vortices by subtracting an appropriate constant:
$$
\nu=\sum^N_{j=1} \Gamma_j\,\delta(\tilde z-\tilde z_j)-C\,, \qquad
{\rm where}\qquad C=\frac{1}{{\rm Vol}(M)}\sum^N_{j=1} \Gamma_j\,.
$$
Indeed, for the existence of $\psi$ satisfying the equation $\Delta \psi=\nu$
on a compact $M$, the function $\nu$ has to have zero mean. 
\end{remark}

\begin{remark}  \upshape 
Recall that in an ideal hydrodynamics the  vorticity is geometrically
a 2-form, and 
for higher dimensional spaces $\R^n$ singular vorticity can be 
supported on submanifolds of codimension 2 (for instance, it describes
evolution of curves in $\R^3$). The corresponding Euler dynamics of the vorticity 2-form is nonlocal, since it requires finding ${\rm curl}^{-1}$.
The  localized induction approximation (LIA) of vorticity motion describes the
 filament (or binormal) equation, which is known to be integrable in $\R^3$.

However, for symplectic fluids there is no natural filament dynamics
since the symplectic vorticity is a $2m$-form, i.e. a form whose degree
is equal to the dimension of the manifold. Hence its singular version
is naturally supported at (symplectic) point vortices, rather than
on submanifolds of larger dimension.  
\end{remark}


\subsection*{Acknowledgments} 
I am indebted to G.~Misio\l ek 
and F.~Soloviev for fruitful suggestions. I am also grateful to the Ecole Polytechnique in Paris for hospitality during completion of this paper.
The present work was partially sponsored by an NSERC research grant.



\end{document}